\def\ppen{\penalty 300 }
\let\col=\colon
\def\colon{\col\ppen}
\theoremstyle{plain} 
\newtheorem{thm}{Theorem}[section]
\newtheorem*{thm*}{Theorem}
\newtheorem{lem}[thm]{Lemma}
\theoremstyle{definition}
\newtheorem{defn}[thm]{Definition}
\newtheorem{rem}[thm]{Remark}
\newtheorem{ex}[thm]{Example}
\numberwithin{equation}{section}
\renewcommand{\theta}{\vartheta}
\renewcommand{\phi}{\varphi}
\renewcommand{\epsilon}{\varepsilon}
\renewcommand{\subset}{\subseteq}
\newcommand{\N}{\mathbb N}
\newcommand{\Z}{\mathbb Z}
\newcommand{\R}{\mathbb R}
\newcommand{\C}{\mathbb C}
\newcommand{\staralg}{\mathop{\rm\ast\mathchar `\-alg}}
\DeclareMathOperator{\Mor}{Mor}
\DeclareMathOperator{\id}{id}
\DeclareMathOperator{\spanlin}{span}
\newcommand{\RCat}{\mathfrak{C}}
\newcommand{\Lin}{\mathscr{L}}
\newcommand{\Oalg}{\mathscr{O}}
\newcommand{\forkpart}{\Partition{
\Pblock 0to0.5:1,2
\Pline (1.5,0.5) (1.5,1)
}}
\newcommand{\immersepart}{\Partition{
\Pline (2,1) (1,0)
\Pline (3,1) (4,0)
\Pblock 0to0.3:2,3
}}
\newcommand{\IcrosspartI}{\Partition{
\Pline (1,1) (1,0)
\Pline (2,1) (3,0)
\Pline (3,1) (2,0)
\Pline (4,1) (4,0)
}}
\begin{document}
\title{Presentations of projective quantum groups}
\author{Daniel Gromada}
\address{Czech Technical University in Prague, Faculty of Electrical Engineering, Department of Mathematics, Technická 2, 166 27 Praha 6, Czechia}
\email{gromadan@fel.cvut.cz}
\thanks{This work was supported by the project OPVVV CAAS CZ.02.1.01/0.0/0.0/16\_019/0000778}
\thanks{I would like to thank Makoto Yamashita and the referee for providing references to Remark~\ref{R.moneq} and Moritz Weber for pointing out a certain mistake in an earlier version of the manuscript.}
\date{\today}
\keywords{free orthogonal quantum group, projective quantum group, representation category}
\subjclass[2020]{20G42 (Primary); 18M25 (Secondary)}

\begin{abstract}
Given an orthogonal compact matrix quantum group defined by intertwiner relations, we characterize by relations its projective version. As a sample application, we prove that $PU_n^+=PO_n^+$.
\end{abstract}

\maketitle
\section*{Introduction}

Compact groups are often defined as matrix groups. That is, as a set of certain matrices. A typical example is the orthogonal group, which can be defined simply as the set of all orthogonal matrices
$$O_n=\{A\in M_n(\C)\mid A^i_j=\bar A^i_j,\; AA^t=1_{n\times n}=A^tA\}.$$

We can say essentially the same about Woronowicz compact quantum groups; the only difference is that we use a slightly different language. Quantum groups form a generalization of groups in non-commutative geometry. They arise by deforming the coordinate algebra of classical groups. We can show this on the example of the orthogonal group $O_n$. Instead of defining it as above, we can define the group in terms of the coordinate algebra $\Oalg(O_n)$, which is determined by generators and relations:
$$\Oalg(O_n)=\staralg(u^i_j,\;i,j=1,\dots,n\mid u^i_j=u^{i*}_j,\;uu^t=1_{n\times n}=u^tu, u^i_ju^k_l=u^k_lu^i_j).$$
Here, $u^i_j$ stand for the coordinate functions $u^i_j(A)=A^i_j$. The coordinate algebra obviously needs to be commutative (since complex functions should commute), but otherwise the definition exactly resembles the former one that works directly with elements of $O_n$. This is what the word \emph{presentation} in the title refers to.

Modifying this definition, we obtain several interesting \emph{quantum} groups. For example, the Wang's free orthogonal quantum group \cite{Wan95free}
$$\Oalg(O_n^+)=\staralg(u^i_j,\;i,j=1,\dots,n\mid u^i_j=u^{i*}_j,\;uu^t=1_{n\times n}=u^tu),$$
the \emph{anticommutative orthogonal quantum group} \cite{BBC07}
$$\Oalg(O_n^{-1})=\staralg\left(u^i_j\,\middle|\,\begin{matrix} u^i_j=u^{i*}_j,\;uu^t=1_{n\times n}=u^tu,\\u^i_ku^j_k=-u^j_ku^i_k,\;u^k_iu^k_j=-u^k_ju^k_i,\;u^i_ku^j_l=u^j_lu^i_k\end{matrix}\right),$$
or Banica's universal orthogonal quantum groups \cite{Ban96}
$$\Oalg(O^+(F)):=\staralg(u^i_j,\;i,j=1,\dots,n\mid\text{$u$ is unitary, }u=F\bar uF^{-1}).$$


But there are also several groups and quantum groups that are defined in a different way; for instance, as quotient groups. For example, we define the \emph{projective orthogonal group} as the quotient of the orthogonal group $O_n$ by the global sign of the matrices $PO_n=O_n/\{\pm 1_{n\times n}\}$. We can define this group again as a matrix group as $PO_n=\{A\otimes A\mid A\in O_n\}$ since the matrix $A$ can be recovered from $A\otimes A$ up to a global sign. But we still may ask the following question: What is the set $\{A\otimes A\}$ exactly? What relations those matrices have to satisfy? In other words, what is the presentation of the algebra $\Oalg(PO_n)\subset\Oalg(O_n)$ using generators and relations? Answer to this question forms the main result of this article -- Theorem~\ref{T}.

\begin{thm*}
Consider a compact matrix quantum group with degree of reflection two $G\subset O^+(F)$, $F\in GL_n$. Consider an admissible generating set $S$ of $\RCat_G$ containing the duality morphism. Then the category $\RCat_{PG}$ is generated by $S':=\{T,\id_{\C^n}\otimes T\otimes\id_{\C^n}\mid T\in S\}$.
\end{thm*}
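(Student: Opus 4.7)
Let $\tilde{\RCat}$ denote the monoidal $\ast$-subcategory of $\RCat_G$ generated by $S'$. The containment $\tilde{\RCat} \cap \RCat_{PG} \subseteq \RCat_{PG}$ is immediate, so the work is to show that every morphism of $\RCat_{PG}$ lies in $\tilde{\RCat}$. The degree-of-reflection-two assumption forces every morphism $T\colon (\C^n)^{\otimes k}\to (\C^n)^{\otimes l}$ of $\RCat_G$ to satisfy $k\equiv l\pmod 2$, so $\RCat_G$ is $\Z/2$-graded by parity of tensor length and $\RCat_{PG}$ is exactly its degree-$0$ part; composition and tensor product respect this grading. The plan is to take an arbitrary $f\in\RCat_{PG}$, use that $S$ generates $\RCat_G$ to write $f$ as a composition $f=L_m\circ\cdots\circ L_1$ of \emph{elementary layers} $L_i=\id^{\otimes p_i}\otimes T_i\otimes\id^{\otimes q_i}$ for single $T_i\in S$, and then argue layer by layer. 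By parity preservation every intermediate object is of even length, so each $L_i$ itself lies in $\RCat_{PG}$ and it suffices to prove every such layer is in $\tilde{\RCat}$.

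The core technical step is the following lemma: for every $T\in S$ and every $p,q\geq 0$ with $p+q+\operatorname{source}(T)$ even, the padded morphism $\id^{\otimes p}\otimes T\otimes\id^{\otimes q}$ lies in $\tilde{\RCat}$. I would prove this by induction on $p+q$. The cases $(p,q)=(0,0)$ and $(p,q)=(1,1)$ are given directly by $T,\,\id\otimes T\otimes \id\in S'$. When $p,q\geq 1$ simultaneously I rewrite
\[
  \id^{\otimes p}\otimes T\otimes\id^{\otimes q}
  \;=\;\id^{\otimes p-1}\otimes(\id\otimes T\otimes\id)\otimes\id^{\otimes q-1},
\]
so the middle factor is in $S'$; provided $p-1$ and $q-1$ are even, the flanking identities lie in $\tilde{\RCat}$ as identities on even-length objects, and the tensor decomposition concludes.

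The main obstacle is the boundary case in which $T$ is odd and exactly one of $p,q$ vanishes (forcing the other to be odd), for instance $T\otimes\id_{\C^n}$ with $T\colon k\to l$ and $k,l$ odd. The naive tensor split would invoke the odd-length identity $\id_{\C^n}$, which is not in $\tilde{\RCat}$. Here the duality morphism in $S$ is decisive: since $R,R^*,\,\id\otimes R\otimes\id,\,\id\otimes R^*\otimes\id$ are all in $S'$, the snake identity $(\id\otimes R^*)\circ(R\otimes\id)=\id_{\C^n}$ yields
\[
  T\otimes\id_{\C^n}
  \;=\;\tfrac{1}{n}\bigl(R^*\otimes\id^{\otimes l+1}\bigr)\circ\bigl(\id^{\otimes 2}\otimes T\otimes\id\bigr)\circ\bigl(R\otimes\id^{\otimes k+1}\bigr),
\]
and symmetrically for $\id_{\C^n}\otimes T$. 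The outer factors split cleanly as even-length identities tensored with $R$ or $R^*\in S'$, so lie in $\tilde{\RCat}$. The middle factor $\id^{\otimes 2}\otimes T\otimes\id$ is again a boundary-shaped padded morphism, which I would handle by a second zig-zag insertion that places $\id\otimes R\otimes\id$ or $\id\otimes R^*\otimes\id$ adjacent to the problematic $T$-box, never introducing a bare odd-length identity as a tensor factor. The boundary instances for $T\otimes\id$ and $\id\otimes T$ appear interdependently in these zig-zag rewritings, so the induction must run simultaneously over the whole family $\{\id^{\otimes p}\otimes T\otimes\id^{\otimes q}\}$; the presence of the duality morphism in $S$ is precisely what provides the zig-zag building blocks ensuring each step lands back in $\tilde{\RCat}$ and that every intermediate object encountered has even length, breaking the apparent circularity.
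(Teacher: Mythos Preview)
Your strategy differs from the paper's. The paper first establishes a closure criterion (its Lemma~\ref{L.PGCat}: the category $\langle S'\rangle$ equals $\RCat_{PG}$ iff it is closed under $T\mapsto\id_{\C^n}\otimes T\otimes\id_{\C^n}$) and then verifies this closure by induction on how a morphism is built from $S'$, using $\id\otimes R\otimes\id\in S'$ to contract the two spurious middle strands in $\id\otimes T_1\otimes\id\otimes\id\otimes T_2\otimes\id$. You instead decompose an arbitrary $f\in\RCat_{PG}$ into $\RCat_G$-layers $\id^{\otimes p}\otimes T\otimes\id^{\otimes q}$ with $T$ a generator and argue each layer already lies in $\tilde\RCat$. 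That is a legitimate and arguably more direct route. Two small omissions: the decomposition is really a \emph{linear combination} of such composites, and the $T_i$ must be allowed to range over $S\cup S^*$.

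The genuine gap is your handling of the boundary case. Your displayed identity for $T\otimes\id$ is correct but vacuous: the outer $R$ and $R^*$ simply collapse to the scalar $n$ (this is $R^*R=n$, not the snake identity), and after stripping the even identity $\id^{\otimes 2}$ from the middle factor you are left with $T\otimes\id$ itself. The promised ``second zig-zag'' is never written down, and there is a structural obstruction to making it work as you envisage: when $T\in S$ has odd source and target, \emph{both} $T$ and $\id\otimes T\otimes\id$ are odd-to-odd, so neither is a morphism in a category whose objects are the even tensor powers, and hence neither is available inside $\tilde\RCat$ as a building block. The clean fix is simply to observe that this case need never arise: since the duality morphism lies in $S$, any odd-to-odd generator can be rotated to an even-to-even one without changing the category it generates, so one may assume without loss of generality that every $T\in S$ is even-to-even (this assumption is also tacit in the paper's own formulation of $\langle S'\rangle$). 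Under that reduction every layer you encounter has $p+q$ even, your two easy cases already exhaust the possibilities, and the argument is complete.
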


We use the language of representation categories and \emph{intertwiners} instead of \emph{relations} because it is much easier to formulate the general result. However, taking any particular instance, it is very easy to apply this to the concrete relations, which we illustrate on examples in Section~\ref{sec.examples}.

The motivation for such questions is the following type of problems: Suppose we need to prove that two quantum groups $G$ and $H$ are equal. If they are not defined by relations, then this problem might be very complicated. But if we are able to rewrite the definition of $\Oalg(G)$ and $\Oalg(H)$ using relations, then it is enough to compare those relations. We illustrate this approach in Section~\ref{sec.appl}, where we show that $PU_n^+=PO_n^+$ (more generally, $PU^+(F)=PO^+(F)$).

Note that a similar technique was used already in \cite[Section~5.2]{GroAbSym}, where the presentation of the group $PO_n$ and the quantum group $PO_n^{-1}$ was determined. But in that case, the $PO_n$ was represented not using the tensor product $u\otimes u$, but using the exterior product $u\wedge u$, which made it considerably more complicated. (The motivation there was again to show that certain two quantum groups are equal.) The current result is much simpler and more general.


\section{Woronowicz--Tannaka--Krein}

In order to keep this note short, we skip the definition of compact matrix quantum groups and related notions. Apart from the original articles of Woronowicz \cite{Wor87,Wor88} we can recommend the survey \cite{Web17} or the author's PhD thesis \cite{GroThesis} for more details.

The main tool for formulating and proving our theorem is the so-called Woro\-nowicz--Tannaka--Krein duality \cite{Wor88}, which enables to express relations in the coordinate algebra in terms of the quantum group intertwiners.

For a compact matrix quantum group $G$ with fundamental representation $u$ of size $n$, we denote by
$$\RCat_G(k,l):=\Mor(u^{\otimes k},u^{\otimes l}):=\{T\colon (\C^n)^{\otimes k}\to (\C^n)^{\otimes l}\mid Tu^{\otimes k}=u^{\otimes l}T\}$$
the associated \emph{intertwiner spaces}. Those spaces form a \emph{concrete rigid monoidal $*$-category} $\RCat_G$, which we will refer to as the \emph{representation category} of $G$.

Given the coordinate Hopf $*$-algebra $\Oalg(G)$, we can also denote by $I_G\subset\C\langle x_{ij}\rangle$ the ideal of all relations that are satisfied in $\Oalg(G)$. That is $I_G=\{f\in\C\langle x_{ij}\rangle\mid f(u^i_j)=0\}$. Then the Woronowicz--Tannaka--Krein duality can be formulated as follows:

\begin{thm}[Woronowicz--Tannaka--Krein]
\label{T.TK}
Let $G\subset O^+(F)$ be a compact matrix quantum group with fundamental representation $u$. Then the relations in $\Oalg(G)$ are spanned by intertwiner relations:
\begin{equation}\label{eq.IG}
I_G=\spanlin\{Tu^{\otimes k}-u^{\otimes l}T\mid T\in\RCat_G(k,l)\}.
\end{equation}

Conversely, let $\RCat$ be any rigid monoidal $*$-category with $\N_0$ as the set of objects and $\RCat(k,l)\subset\Lin((\C^n)^{\otimes k},(\C^n)^{\otimes l})$ as the set of morphisms. Then the intertwiner relations, i.e.\ the formula \eqref{eq.IG} define a compact matrix quantum group $G$ with $\RCat_G=\RCat$. In this case, we have $G\subset O^+(F)$ with $F_i^j=R^{ij}$, where $R$ is the duality morphism in $\RCat_G$. So the $*$-operation on $\Oalg(G)$ has to be defined by $\bar u=F^{-1}uF$.
\end{thm}

Also important is the fact that the category operations in $\RCat_G$ exactly correspond to algebraic manipulations with the relations. Suppose that the category $\RCat_G$ is \emph{generated} by some set of intertwiners $S=\{T_1,\dots,T_m\}$. By this we mean that $\RCat_G$ is the smallest monoidal $*$-category with $\RCat_G(k,l)\subset\Lin((\C^n)^{\otimes k},(\C^n)^{\otimes l})$, which contains all the intertwiners $T_1,\dots,T_m$. We will denote this by $\RCat_G=\langle T_1,\dots,T_m\rangle_n$. Then it holds that the ideal $I_G$ is generated by the intertwiner relations in $S$ (and the unitarity for $u$). That is,
$$\Oalg(G)=\staralg(u^i_j\mid \bar u=F^{-1}uF,\; uu^*=1=u^*u,\; Tu^{\otimes k}=u^{\otimes l}T, T\in S(k,l)),$$
where the \emph{unitarity} relations $uu^*=1=u^*u$ can be equivalently replaced by \emph{orthogonality} relations $u(F^{-1}uF)^t=1=(F^{-1}uF)^tu$, which are exactly the relations corresponding to the duality morphisms $R$ and $R^*$.

Let us comment more in detail on why it is important to include the subscript $n$ in the notation $\langle\cdots\rangle_n$. In this article, we will identify the vector spaces $\C^n\otimes\C^n$ and $\C^{n^2}$. Consequently, a linear map $T\colon\C^{n^{2k}}\to\C^{n^{2l}}$ can be considered either as an intertwiner $(\C^n)^{\otimes 2k}\to(\C^n)^{\otimes 2l}$ or as an intertwiner $(\C^{n^2})^{\otimes k}\to(\C^{n^2})^{\otimes l}$. That is, it can either generate a category $\RCat=\langle T\rangle_n$, where an object $m\in\N_0$ is identified with $(\C^n)^{\otimes m}$, or it can generate a category $\RCat'=\langle T\rangle_{n^2}$, where an object $m\in\N_0$ is identified with $(\C^{n^2})^{\otimes m}$. These two categories might be quite different. The whole point of this article is to characterize the situation when the two categories are essentially the same ($\RCat'$ is a full subcategory of $\RCat$).

\begin{rem}[Diagrams]
We will sometimes use a diagrammatic description of the intertwiners here. In particular, the straight line $\idpart$ or more precisely the symbol $T_{\idpart}^{(n)}$ will denote the identity operator $\id\colon\C^n\to\C^n$. The \emph{pairing} $\pairpart$ or, more precisely, the symbol $T_{\pairpart}^{(n)}$ will denote the duality morphism $\C\to \C^n\otimes\C^n$. In the case $G\subset O_n^+$, we have $[T_{\pairpart}^{(n)}]^{ij}=\delta_{ij}$. From these two intertwiners, we can construct more complicated ones such as $\immersepart$ or $\Labba$ and so on. Note that all diagrams in this article are to be read top to bottom.
\end{rem}

\begin{rem}[Rotations]
Rotating the morphisms, sometimes referred to as the \emph{Frobenius reciprocity}, is a feature of rigid monoidal categories, which provides a bijective correspondence $\RCat(k,l)\leftrightarrow\RCat(k-1,l+1)$:
$$
\BigPartition{
\Pline(0.5,1)(7.5,1)
\Pline(0.5,0)(7.5,0)
\Pline(0.5,0)(0.5,1)
\Pline(7.5,0)(7.5,1)
\Ptext(4,0.5){$T$}
\Pline(1,0)(1,-0.5)
\Pline(2.5,0)(2.5,-0.5)
\Ptext(4,-0.4){$\cdots$}
\Pline(5.5,0)(5.5,-0.5)
\Pline(7,0)(7,-0.5)
\Ptext(1,0.15){$\scriptstyle 1$}
\Ptext(2.5,0.15){$\scriptstyle 2$}
\Ptext(5.5,0.15){$\scriptstyle l-1$}
\Ptext(7,0.15){$\scriptstyle l$}
\Pline(1,1)(1,1.5)
\Pline(2.5,1)(2.5,1.5)
\Ptext(4,1.2){$\cdots$}
\Pline(5.5,1)(5.5,1.5)
\Pline(7,1)(7,1.5)
\Ptext(1,0.85){$\scriptstyle 1$}
\Ptext(2.5,0.85){$\scriptstyle 2$}
\Ptext(5.5,0.85){$\scriptstyle k-1$}
\Ptext(7,0.85){$\scriptstyle k$}
}
\quad\leftrightarrow\quad
\BigPartition{
\Pline(0.5,1)(7.5,1)
\Pline(0.5,0)(7.5,0)
\Pline(0.5,0)(0.5,1)
\Pline(7.5,0)(7.5,1)
\Ptext(4,0.5){$T$}
\Pline(1,0)(1,-0.5)
\Pline(2.5,0)(2.5,-0.5)
\Ptext(4,-0.4){$\cdots$}
\Pline(5.5,0)(5.5,-0.5)
\Pline(7,0)(7,-0.5)
\Ptext(1,0.15){$\scriptstyle 1$}
\Ptext(2.5,0.15){$\scriptstyle 2$}
\Ptext(5.5,0.15){$\scriptstyle l-1$}
\Ptext(7,0.15){$\scriptstyle l$}
\Pline(2.5,1)(2.5,1.5)
\Ptext(4,1.2){$\cdots$}
\Pline(5.5,1)(5.5,1.5)
\Pline(7,1)(7,1.5)
\Ptext(1,0.85){$\scriptstyle 1$}
\Ptext(2.5,0.85){$\scriptstyle 2$}
\Ptext(5.5,0.85){$\scriptstyle k-1$}
\Ptext(7,0.85){$\scriptstyle k$}
\Pblock 1to1.2:-0.5,1
\Pline (-0.5,1)(-0.5,-0.5)
\Ptext (0.2,1.4) {$\scriptstyle T_{\pairpart}$}
}.
$$

Consequently, any representation category is uniquely determined by the collection of spaces $\RCat(0,k)$, $k\in\N_0$ since all the morphism spaces $\RCat(k,l)$ can be recovered using rotations.

Note also that the spaces $\RCat(0,k)$ are closed under \emph{one-line rotations}:
$$
\BigPartition{
\Pline(0.5,1)(7.5,1)
\Pline(0.5,0)(7.5,0)
\Pline(0.5,0)(0.5,1)
\Pline(7.5,0)(7.5,1)
\Ptext(4,0.7){$T$}
\Pline(1,0)(1,-0.5)
\Pline(2.5,0)(2.5,-0.5)
\Ptext(4,-0.4){$\cdots$}
\Pline(5.5,0)(5.5,-0.5)
\Pline(7,0)(7,-0.5)
\Ptext(1,0.15){$\scriptstyle 1$}
\Ptext(2.5,0.15){$\scriptstyle 2$}
\Ptext(5.5,0.15){$\scriptstyle k-1$}
\Ptext(7,0.15){$\scriptstyle k$}
}
\quad\leftrightarrow\quad
\BigPartition{
\Pline(0.5,1)(7.5,1)
\Pline(0.5,0)(7.5,0)
\Pline(0.5,0)(0.5,1)
\Pline(7.5,0)(7.5,1)
\Ptext(4,0.7){$T$}
\Pline(1,0)(1,-0.5)
\Pline(2.5,0)(2.5,-0.5)
\Ptext(4,-0.4){$\cdots$}
\Pline(5.5,0)(5.5,-0.5)
\Ptext(1,0.15){$\scriptstyle 1$}
\Ptext(2.5,0.15){$\scriptstyle 2$}
\Ptext(5.5,0.15){$\scriptstyle k-1$}
\Ptext(7,0.15){$\scriptstyle k$}
\Pblock 0to-0.2:7,8.5
\Pblock 0to1.2:-0.5,8.5
\Pline (-0.5,0)(-0.5,-0.5)
\Ptext(7.8,-0.4){$\scriptstyle T_{\pairpart}^*$}
\Ptext(4,1.4){$\scriptstyle T_{\pairpart}$}
}
.
$$
\end{rem}

\section{Main result}
\label{sec.main}

The main question, which we are going to answer in this section is: If $G$ is a compact matrix quantum group defined by some relations, can we describe its projective version $PG$ in terms of relations as well?

\begin{defn}
Consider an arbitrary compact matrix quantum group $G\subset O^+(F)$ and denote by $u$ its fundamental representation. Then we define its \emph{projective version} $PG$ to be determined by the representation $v:=u\otimes u$ and coordinate algebra $\Oalg(PG)\subset\Oalg(G)$ generated by the entries $v^{ik}_{jl}=u^i_ju^k_l$.
\end{defn}

In the spirit of Tannaka--Krein duality, the questions about algebraic relations can equivalently be restated to a question about intertwiners. There is actually a straightforward way to describe the representation category of $PG$ in terms of the representation category of $G$. Namely, we obviously have that
$$\RCat_{PG}(k,l)=\Mor(v^{\otimes k},v^{\otimes l})=\Mor(u^{\otimes 2k},u^{\otimes 2l})=\RCat_G(2k,2l)$$
That is, we are just restricting the set of all objects $\N_0$ to the even numbers only. This corresponds to a well-known general fact that if we take a quotient quantum group, then its representation category is a full subcategory.

\begin{rem}\label{R.ogtwist}
Consider $G\subset O_n^+$. The representation category of $PG$ is, of course, a rigid monoidal $*$-category. But in this case, the duality morphism is not $T^{(n^2)}_{\pairpart}=T^{(n)}_{\Labab}$ since this is not an intertwiner of $G$ unless $G$ is a group. We need to use the duality morphism $T^{(n)}_{\Labba}$ instead.

Consequently, $PG$ is not a quantum subgroup of $O_{n^2}^+$, but only $PG\subset O^+(F)$ for a suitable $F$ (namely the flip map $\C^n\otimes\C^n\to\C^n\otimes\C^n$). In the language of relations, compare the standard orthogonality, which is of the form $\sum_k u^i_ku^j_k=\delta_{ij}=\sum_k u^k_iu^k_j$, with the somewhat twisted orthogonality, which holds here, $\sum_{k,l}v^{ab}_{kl}v^{cd}_{lk}=\delta_{ad}\delta_{bc}=\sum_{k,l}v^{kl}_{ab}v^{lk}_{cd}$. The entries are also not self-adjoint; instead, we have $v^{ij\,*}_{kl}=v^{ji}_{lk}$.
\end{rem}

However, the above observation does not help us regarding our question. It only says the obvious fact that $\Oalg(PG)$ satisfies exactly all the relations of $\Oalg(G)$ that make sense here. But we are rather interested in some small set of relations that would imply all the others. That is, we are looking for some generating set of the category.

Now things become a bit tricky since restricting the set of objects restricts the range of operations we can do. It can be well illustrated on the example of the rotation operation. Considering a linear map $T\in\C^{n^{2k}}$, we can do the rotation operation inside $\RCat:=\langle T_{\pairpart}^{(n)},T\rangle_n$
$$
R(T)=(\id_{\C^n}^{\otimes 2k}\otimes T_{\pairpart}^{(n)\,*})(\id_{\C^n}\otimes T\otimes\id_{\C^n})T^{(n)}_{\pairpart}=
\BigPartition{
\Pline(0.5,1)(7.5,1)
\Pline(0.5,0)(7.5,0)
\Pline(0.5,0)(0.5,1)
\Pline(7.5,0)(7.5,1)
\Ptext(4,0.7){$T$}
\Pline(1,0)(1,-0.5)
\Pline(2.5,0)(2.5,-0.5)
\Ptext(4,-0.4){$\cdots$}
\Pline(5.5,0)(5.5,-0.5)
\Ptext(1,0.15){$\scriptstyle 1$}
\Ptext(2.5,0.15){$\scriptstyle 2$}
\Ptext(5.5,0.15){$\scriptscriptstyle 2k-1$}
\Ptext(7,0.15){$\scriptscriptstyle 2k$}
\Pblock 0to-0.2:7,8.5
\Pblock 0to1.2:-0.5,8.5
\Pline (-0.5,0)(-0.5,-0.5)
\Ptext(7.8,-0.4){$\scriptstyle T_{\pairpart}^*$}
\Ptext(4,1.4){$\scriptstyle T_{\pairpart}$}
}
.
$$
Whereas for $\RCat':=\langle T_{\pairpart}^{(n)},T\rangle_{n^2}$, it is not guaranteed that this category is closed under such a rotation. Although $T_{\pairpart}$ is contained in $\RCat'$, we are not guaranteed that $\id_{\C^n}\otimes T\otimes\id_{\C^n}$ is contained in the category. This is because $\C^n$ is not an object in the category anymore, so we do not have the morphism $\id_{\C^n}$ at our disposal. We only can do the following kind of a rotation
$$
R'(T)=(\id_{\C^{n^2}}^{\otimes 2k}\otimes T_{\Labba}^{(n)\,*})(\id_{\C^{n^2}}\otimes T\otimes\id_{\C^{n^2}})T^{(n)}_{\Labba}=
\BigPartition{
\Pline(0.5,1)(7.5,1)
\Pline(0.5,0)(7.5,0)
\Pline(0.5,0)(0.5,1)
\Pline(7.5,0)(7.5,1)
\Ptext(4,0.7){$T$}
\Pline(1.5,0)(1.5,-0.5)
\Pline(2,0)(2,-0.5)
\Ptext(4,-0.4){$\cdots$}
\Ptext(1.75,0.15){$\scriptstyle 1$}
\Ptext(6.75,0.15){$\scriptscriptstyle k$}
\Pblock 0to-0.2:7,8
\Pblock 0to-0.4:6.5,8.5
\Pblock 0to1.4:-0.5,8.5
\Pblock 0to1.2:0,8
\Pline (-0.5,0)(-0.5,-0.5)
\Pline (0,0)(0,-0.5)
\Ptext(7.8,-0.6){$\scriptstyle T_{\Labba}^*$}
\Ptext(4,1.6){$\scriptstyle T_{\Labba}$}
}
.
$$
We essentially have $R'=R^2$.

To be more concrete, take $G=O_n^+$. The corresponding category is just generated by the duality morphism $T_{\pairpart}^{(n)}\in\Mor(1,u^{\otimes 2})=\RCat_G(0,2)$, so $\RCat_G=\langle T_{\pairpart}^{(n)}\rangle_n$. This is also an intertwiner of $PO_n^+$: $T_{\pairpart}^{(n)}\in\Mor(1,v)=\RCat_{PO_n^+}(0,1)$. But does $T_{\pairpart}^{(n)}$ generate the whole $\RCat_{PO_n^+}$? Does it hold that $\RCat_{PG}=\langle T_{\pairpart}^{(n)}\rangle_{n^2}=:\RCat'$? That is, are we able to construct all elements of $\RCat_{PG}$ using the category operations from $T_{\pairpart}^{(n)}$? This is not obvious and the answer turns out to be \emph{no}. Starting with $T_{\pairpart}^{(n)}\in\RCat'(0,1)\subset\RCat_{PG}(0,1)=\RCat_G(0,2)$, we can use the tensor product to obtain $T_{\Laabb}^{(n)}\in\RCat'(0,2)\subset\RCat_{PG}(0,2)=\RCat_G(0,4)$. Now inside $\RCat_G$, we can use the ``fine'' rotation to obtain $T_{\Labba}^{(n)}\in\RCat_G(0,4)=\RCat_{PG}(0,2)$, but we will never be able to do this insider $\RCat'=\langle T_{\pairpart}^{(n)}\rangle_{n^2}$.

Now let us formulate this as a formal mathematical statement. Before we do that, we need to handle one technical issue. We say that a quantum group $G\subset O^+(F)$ has \emph{degree of reflection two} if $\RCat_G(k,l)=\{0\}$ for every $k+l$ odd (see \cite{GWext,GroGlue} for more details). We will say that a generating set $S$ of such a category $\RCat$ is \emph{admissible} if it only contains elements of $\RCat_G(k,l)$, where both $k$ and $l$ are even. Note that any generating set $S$ of a category $\RCat_G$, where $G$ has degree of reflection two, can very easily be made admissible just by rotations (e.g. rotating every generator $T\in\RCat(k,l)$ to $\tilde T\in\RCat(0,k+l)$).

\begin{lem}
\label{L.PGCat}
Consider a compact matrix quantum group with degree of reflection two $G\subset O^+(F)$, $F\in GL_n$. Consider an admissible generating set $S$ of $\RCat_G$ containing the duality morphism. Denote $\RCat':=\langle S\rangle_{n^2}$. Then $\RCat'=\RCat_{PG}$ (that is, $S$ is a generating set of $\RCat_{PG}$) if and only if, for every $T\in\RCat'$, we have $\id_{\C^n}\otimes T\otimes\id_{\C^n}\in\RCat'$.
\end{lem}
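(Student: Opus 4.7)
The plan is to prove the two implications separately. The ``only if'' direction is immediate: if $\RCat = \RCat_{PG}$ and $T \in \RCat = \RCat_{PG}(k,l) = \RCat_G(2k,2l)$, then $\id_{\C^n}\otimes T\otimes \id_{\C^n}$ lies in $\RCat_G(2k+2,2l+2) = \RCat_{PG}(k+1,l+1) = \RCat$.

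For the ``if'' direction, the inclusion $\RCat \subset \RCat_{PG}$ is built into the definition, so the task is to establish $\RCat_{PG} \subset \RCat$. Without loss of generality I would first rotate each generator into $\RCat_G(0, 2m_i)$-form using the duality morphism $T_{\pairpart}^{(n)}\in S$, so that every generator has both degrees even. The sandwich hypothesis then yields the projective duality morphism
\[
T_{\Labba}^{(n)} = (\id_{\C^n}\otimes T_{\pairpart}^{(n)}\otimes \id_{\C^n})\circ T_{\pairpart}^{(n)} \in \RCat,
\]
making $\RCat$ rigid at dimension $2n$. Frobenius reciprocity inside $\RCat$ then reduces the task to showing $\RCat_G(0,2m) \subset \RCat(0,m)$ for every $m$.

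The core of the argument is a parity-matched layer decomposition. Since $S$ generates $\RCat_G$, the interchange law lets us express any $\tilde T \in \RCat_G(0, 2m)$ as a linear combination of compositions
\[
X_r \circ X_{r-1} \circ \cdots \circ X_1,\qquad X_i = \id_{\C^n}^{\otimes p_i}\otimes Y_i\otimes \id_{\C^n}^{\otimes q_i},\; Y_i\in S\cup S^*.
\]
Because the initial object $\C$ and the terminal object $(\C^n)^{\otimes 2m}$ have even $\C^n$-rank, and every $Y_i$ has even degree-sum by the degree-of-reflection-two hypothesis, the object at every intermediate stage has even rank, forcing $p_i + q_i$ to be even at every step. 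Thus $p_i$ and $q_i$ always share the same parity.

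With this matching in place, each layer sits inside $\RCat$: when $p_i, q_i$ are both even, $X_i$ is $Y_i$ tensored at dimension $2n$ with copies of $\id_{\C^{2n}}$; when both are odd, one first applies the sandwich $\id_{\C^n}\otimes Y_i\otimes \id_{\C^n}$---available by hypothesis---and then tensors with copies of $\id_{\C^{2n}}$. Composing these layers inside $\RCat$ and taking the linear combination recovers $\tilde T$ in $\RCat$. The main obstacle I anticipate is the parity bookkeeping: one must verify that every word representing an element of $\RCat_G(0,2m)$ really does admit a choice with $p_i$ and $q_i$ of equal parity at every stage, which is exactly what the degree-of-reflection-two hypothesis---combined with the evenness of the generators secured by the initial reduction---guarantees.
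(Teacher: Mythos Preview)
Your proof is correct and takes a genuinely different route from the paper's. After the common reductions (the easy direction, building $T_{\Labba}^{(n)}\in\RCat$ to get rigidity, and invoking Frobenius reciprocity to reduce to the spaces $\RCat(0,m)$), the two arguments diverge. The paper shows that $\RCat(0,m)$ is closed under a ``half-rotation'' $R^{1/2}$, then assembles an auxiliary collection $\tilde\RCat$ with $\tilde\RCat(0,2k)=\RCat(0,k)$ and $\tilde\RCat(0,2k+1)=0$, and appeals to an external criterion \cite[Prop.~3.11]{GroGlue} to conclude that $\tilde\RCat$ is a category, hence equals $\RCat_G$. Your argument instead unfolds any $\tilde T\in\RCat_G(0,2m)$ into layers $\id_{\C^n}^{\otimes p_i}\otimes Y_i\otimes\id_{\C^n}^{\otimes q_i}$ with $Y_i\in S\cup S^*$, and uses the degree-of-reflection-two hypothesis to force every intermediate object to have even $\C^n$-rank; together with the evenness of the source and target of each $Y_i$ this pins $p_i\equiv q_i\pmod 2$, so every layer lies in $\RCat$ either directly or after one application of the sandwich hypothesis.

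Your approach is more elementary and self-contained, avoiding the external reference; the paper's is more structural and isolates the single operation ($R^{1/2}$) whose closure is the real content. One small remark: your ``rotate the generators into $\RCat_G(0,2m_i)$-form'' step is harmless but not really needed --- for $\langle S\rangle_{2n}$ to be defined at all, every $T\in S$ must already lie in some $\RCat_G(2a,2b)$, so each $Y_i$ automatically has even source and target, and the parity argument goes through without any preliminary rotation.
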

\begin{proof}
The left-right implication is obvious: Taking $T\in\RCat_{PG}(k,l)=\RCat_G(2k,2l)$, we must have $\id_{\C^n}\otimes T\otimes\id_{\C^n}\in\RCat_{PG}(k+1,l+1)=\RCat_G(2k+2,2l+2)$ since $\RCat_G$ is closed under tensor products and contains $\id_{\C^n}\in\RCat_G(1,1)$.

Now, we prove the right-left implication. Denote by $T_{\pairpart}$ the duality morphism in $\RCat_G$ (corresponding to the object 1). Since we assume that $T_{\pairpart}\in S$, we must in particular have $T_{\pairpart}\in\RCat(0,1)$ (which does not play the role of the duality morphism in $\RCat'$ since it intertwines the object 0 with the object 1, not with 2). Now, we can define $T_{\Labba}:=(\id_{\C^n}\otimes T_{\pairpart}\otimes\id_{\C^n})T_{\pairpart}$, which can now play the role of the duality morphism in $\RCat'$. Therefore, $\RCat'$ is a rigid monoidal $*$-category. In particular, we can perform rotations in $\RCat'$ (but we always have to remember to rotate the whole object $1\simeq\C^{n^2}$).

Consequently, we can now use the Frobenius reciprocity. This means, that it is enough now to prove that $\RCat'(0,k)=\RCat_{PG}(0,k)=\RCat_G(0,2k)$ for every $k\in\N_0$. Notice that the spaces $\RCat'(0,k)$ are closed under the operation $R'^{1/2}\colon\RCat'(0,k)\to\RCat'(0,k)$ defined by
$$R'^{1/2}(T)=(\id_{\C^n}^{\otimes 2k}\otimes T_{\pairpart}^*)(\id_{\C^n}\otimes T\otimes\id_{\C^n})T_{\pairpart}=
\BigPartition{
\Pline(0.5,1)(9.5,1)
\Pline(0.5,0)(9.5,0)
\Pline(0.5,0)(0.5,1)
\Pline(9.5,0)(9.5,1)
\Ptext(5,0.7){$T$}
\Pline(1,-0.2)(1,-0.6)
\Pline(2,-0.2)(2,-0.6)
\Pline(3,-0.2)(3,-0.6)
\Pline(4,-0.2)(4,-0.6)
\Ptext(5,-0.4){$\cdots$}
\Pline(6,-0.2)(6,-0.6)
\Pline(7,-0.2)(7,-0.6)
\Pline(8,-0.2)(8,-0.6)
\Pline(1,-0.2)(1.4,0)
\Pline(2,-0.2)(1.6,0)
\Pline(3,-0.2)(3.4,0)
\Pline(4,-0.2)(3.6,0)
\Pline(6,-0.2)(6.4,0)
\Pline(7,-0.2)(6.6,0)
\Pline(8,-0.2)(8.4,0)
\Ptext(1.5,0.15){$\scriptstyle 1$}
\Ptext(3.5,0.15){$\scriptstyle 2$}
\Ptext(6.5,0.15){$\scriptstyle k-1$}
\Ptext(8.5,0.15){$\scriptstyle k$}
\Pblock 0to-0.2:8.6,10
\Pblock -0.2to1.2:0,10
\Pline(0,-0.2)(0,-0.6)
\Ptext(9.5,-0.4){$\scriptstyle T_{\pairpart}^*$}
\Ptext(5,1.4){$\scriptstyle T_{\pairpart}$}
},$$
which somehow mimics the rotation $R\colon\RCat_G(0,k)\to\RCat_G(0,k)$. We claim that this is already enough to finish the proof. Here is the reason why:

We define the collection of linear spaces $\RCat(k,l)$ by $\RCat(0,2k)=\RCat'(0,k)$, $\RCat(0,\ppen 2k+1)=\{0\}$ and finally $\RCat(k,l)$ is defined by rotations using $T_{\pairpart}$. If we prove that $\RCat$ forms a category, then it must be the category generated by $S$, which is exactly $\RCat_G$, so this will prove the equality $\RCat'(0,k)=\RCat(0,2k)=\RCat_G(0,2k)=\RCat_{PG}(0,k)$. The question whether $\RCat$ forms a category can be characterized purely in terms of the spaces $\RCat(0,k)$ \cite[Prop.~3.11]{GroGlue} -- namely it has to be closed under tensor products, contractions, one-line rotations, and reflections (see \cite[Def.~3.10]{GroGlue} for details). The only problem here is the rotation since the standard rotation in $\RCat'$ induces rotation by two objects in $\RCat$. However, the required ``finer'' rotation by a single object is induced by the ``half-rotation'' $R'^{1/2}$, which we introduced above. Consequently, $\RCat$ is indeed a category, which is what we wanted to show.
\end{proof}

\begin{thm}
\label{T}
Consider a compact matrix quantum group with degree of reflection two $G\subset O^+(F)$, $F\in GL_n$. Consider an admissible generating set $S$ of $\RCat_G$ containing the duality morphism. Then the category $\RCat_{PG}$ is generated by $S':=\{T,\id_{\C^n}\otimes T\otimes\id_{\C^n}\mid T\in S\}$.
\end{thm}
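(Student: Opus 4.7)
The plan is to apply Lemma~\ref{L.PGCat} with $S'$ in place of $S$. Since $S\subset S'$, the set $S'$ still generates $\RCat_G$ and contains the duality morphism, so the lemma's hypothesis holds and our task reduces to showing that $\RCat':=\langle S'\rangle_{2n}$ is closed under the operation $\Phi(T):=\id_{\C^n}\otimes T\otimes\id_{\C^n}$. To verify this I would introduce the set
$$X:=\{T\in\RCat'\mid\Phi(T)\in\RCat'\}$$
and check that $X$ is a monoidal $*$-subcategory of $\RCat'$ containing the generating set $S'$, which forces $X=\RCat'$.

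The easy closure properties follow from the identities $\Phi(T_2T_1)=\Phi(T_2)\Phi(T_1)$, $\Phi(T^*)=\Phi(T)^*$, and $\Phi(\id_{\C^{2n}})=\id_{\C^{2n}}\otimes\id_{\C^{2n}}$, covering composition, adjoints, and the unit. For the base case $S'\subset X$: if $T\in S$ then $\Phi(T)\in S'\subset\RCat'$ by construction of $S'$; if instead the generator has the form $\id_{\C^n}\otimes T\otimes\id_{\C^n}\in S'$, then $\Phi$ applied to it equals $\id_{\C^{2n}}\otimes T\otimes\id_{\C^{2n}}$, which is a tensor product of elements of $\RCat'$.

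The main obstacle will be closure of $X$ under tensor products. Given $T_1,T_2\in X$, the target $\Phi(T_1\otimes T_2)=\id_{\C^n}\otimes T_1\otimes T_2\otimes\id_{\C^n}$ differs from the readily available $\Phi(T_1)\otimes\Phi(T_2)=\id_{\C^n}\otimes T_1\otimes\id_{\C^n}\otimes\id_{\C^n}\otimes T_2\otimes\id_{\C^n}$ by a superfluous pair of identity strands squeezed between $T_1$ and $T_2$. My plan is to eliminate this pair using the generator $\id_{\C^n}\otimes T_{\pairpart}\otimes\id_{\C^n}\in S'$ and its adjoint: padded appropriately with $\id_{\C^{2n}}$'s, these furnish morphisms in $\RCat'$ whose composition with $\Phi(T_1)\otimes\Phi(T_2)$ from below and above closes off the two extra $n$-strands into a loop, contributing a scalar factor of $n$ and otherwise producing $\Phi(T_1\otimes T_2)$. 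Since $n\neq 0$ and every constituent morphism lies in $\RCat'$, this yields $\Phi(T_1\otimes T_2)\in\RCat'$ and completes the closure argument; Lemma~\ref{L.PGCat} then concludes $\RCat'=\RCat_{PG}$.
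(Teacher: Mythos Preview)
Your argument is correct and follows essentially the same route as the paper: reduce to Lemma~\ref{L.PGCat}, then show closure of $\langle S'\rangle_{2n}$ under $T\mapsto\id_{\C^n}\otimes T\otimes\id_{\C^n}$ by checking it is preserved by the category operations, the only nontrivial case being the tensor product, which is handled by inserting $\id_{\C^n}\otimes T_{\pairpart}\otimes\id_{\C^n}$ and its adjoint to eliminate the two spurious middle strands. Two small remarks: your symbol $\id_{\C^{2n}}$ should read $\id_{\C^n\otimes\C^n}$ (or $\id_{\C^n}^{\otimes 2}$), since the unit object in $\RCat'$ is $(\C^n)^{\otimes 2}\cong\C^{n^2}$, not $\C^{2n}$; and the loop value is $T_{\pairpart}^*T_{\pairpart}$, which equals $n$ only in the special case $F=\id$, though it is always a nonzero scalar, so your conclusion stands.
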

\begin{proof}
Denote by $\RCat=\langle S'\rangle_{n^2}$ the category generated by $S'$. To show that $\RCat=\RCat_{PG}$, we will use the preceding lemma. That is, we need to show that $\id_{\C^n}\otimes T\otimes\id_{\C^n}\in\RCat$ for every $T\in\RCat$. We prove this by induction on the number of category operations it is needed to generate $T$ from the corresponding generators in $S'$.

Effectively, we need to do the following. Take any $T_1,T_2\in\RCat$ such that also $\id_{\C^n}\otimes T_j\otimes\id_{\C^n}\in\RCat$, $j=1,2$. Then we need to show that all of the following $\id_{\C^n}\otimes T_1\otimes T_2\otimes\id_{\C^n}$, $\id_{\C^n}\otimes (T_2T_1)\otimes\id_{\C^n}$ (if $T_1$ and $T_2$ are composable), and $\id_{\C^n}\otimes T^*\otimes\id_{\C^n}$ are contained in $\RCat$.

The cases of composition and involution are obvious, so let us show it for the case of the tensor product. So, obviously, we can construct the following element $\id_{\C^n}\otimes T_1\otimes\id_{\C^n}\otimes\id_{\C^n}\otimes T_2\otimes\id_{\C^n}$ (simply using the tensor product in $\RCat$). To get rid of the identities in the middle, we need to sandwich this with elements of the form $\id_{\C^n}^{\otimes 2k_1}\otimes \id_{\C^n}\otimes T_{\pairpart}\otimes \id_{\C^n}\otimes \id_{\C^n}^{\otimes 2k_2}$, where $k_1,k_2\in\N_0$ and $T_{\pairpart}$ is the duality morphism of $\RCat_G$. Such an element must be in $\RCat$ since the even powers of the identity are there by definition and we have $T_{\pairpart}\in S$, so $\id_{\C^n}\otimes T_{\pairpart}\otimes\id_{\C^n}\in S'\subset\RCat$.
\end{proof}

\section{Examples}
\label{sec.examples}

\begin{ex}[Free orthogonal quantum group]
\label{E.free}
As the first example, take the universal orthogonal quantum group $O^+(F)$, whose representation category is generated just by the duality morphism $T^{(n)}_{\pairpart}$ \cite{Ban96}. Applying Theorem~\ref{T}, we get that the representation category of $PO^+(F)$ is generated by the set $\{T^{(n)}_{\pairpart},T^{(n)}_{\immersepart}\}$. Equivalently, we can use the generating set $\{T^{(n)}_{\Labba},T^{(n)}_{\immersepart}\}$, which contains the duality morphism $T^{(n)}_{\Labba}$.

Applying Tannaka--Krein duality, we can translate this result into algebraic relations. To simplify the formulas, take $G:=O_n^+$ the ordinary free orthogonal quantum group. Here, we have $[T_{\pairpart}^{(n)}]^{ij}=\delta_{ij}$. The free orthogonal quantum group can be defined as $O_n^+=(\Oalg(O_n^+),u)$, where $\Oalg(O_n^+)$ is the universal algebra generated by the elements $u^i_j$, $i,j=1,\dots,n$ that are self-adjoint and satisfy the orthogonality relation $uu^t=1=u^tu$, i.e.\ $\sum_k u^i_ku^j_k=\delta_{ij}=\sum_k u^k_iu^k_j$.

Writing down the relations for $2n\times 2n$ matrix $v$ associated to the intertwiners $T^{(n)}_{\pairpart}$ and $T^{(n)}_{\immersepart}$, we get a presentation of $PO_n^+$. Namely, we get that $PO_n^+=(\Oalg(PO_n^+),v)$ with
$$\Oalg(PO_n^+)=\staralg\left(v^{ik}_{jl}\,\middle|\,\begin{matrix}\sum_{k,l}v^{ab}_{kl}v^{cd}_{lk}=\delta_{ad}\delta_{bc}=\sum_{k,l}v^{kl}_{ab}v^{lk}_{cd},\\v^{ik\,*}_{jl}=v^{ki}_{lj},\;\sum_kv^{ai}_{bk}v^{jc}_{kd}=\delta_{ij}v^{ac}_{bd}\end{matrix}\right).$$

The relation $v^{ik\,*}_{jl}=v^{ki}_{lj}$ is essentially just a definition of the involution on $\Oalg(PO_n^+)$, which comes (by Theorem~\ref{T.TK}) from the fact that $PG\subset O^+(F')$, where $F'$ is given by the new duality morphism $[F']_{ij}^{kl}=[T_{\Labba}^{(n)}]^{ijkl}=\delta_{il}\delta_{jk}$. The first relation is the orthogonality relation corresponding to the duality morphisms $T^{(n)}_{\Labba}$ and $T^{(n)}_{\Uabba}$ (cf.\ Remark~\ref{R.ogtwist}). Finally, the third relation corresponds to the second generator $T_{\immersepart}^{(n)}$. 

For the general case $O^+(F)$, we get $PO^+(F)\subset O^+(F')$, where $[F']^{kl}_{ij}=F_i^lF_j^k$ and
$$\Oalg(PO^+(F))=\staralg(v^{ik}_{jl}\mid \bar v=F'^{-1}vF',\;vv^*=1=v^*v,\;\sum_{k,l}v^{ai}_{bk}v^{jc}_{ld}F_k^l=F_i^jv^{ac}_{bd}).$$
\end{ex}

\begin{rem}
\label{R.moneq}
The generating set for $\RCat_{PO_n^+}$ might not be too surprising for experts. It is known that $PO_n^+$ is monoidally equivalent with $S_{n^2}^+$. The representation category of $S_{n^2}^+$ may be modelled by non-crossing partitions and then the monoidal equivalence is given by the ``fattening'' procedure
$${\def\Pwidth{0.2}\LPartition{0.5:2}{0.5:3,4,5;0.9:1,6}}\qquad\leftrightarrow\qquad\LPartition{}{1:0.9,6.2;0.8:1.2,5.9;0.6:2.9,5.2;0.4:3.2,3.9;0.4:4.2,4.9;0.5:1.9,2.2}.$$
It is also known that the category of all partitions is generated by ${\def\Pwidth{0.2}\La}$ and ${\def\Pwidth{0.2}\forkpart}$. Applying the procedure above, we get the generators $\LPartition{}{0.5:0.9,1.1}$ and
$
\Partition{
\Pblock 0to0.40:1.1,1.9
\Pline (0.9,0)(0.9,0.55)
\Pline (0.9,0.55)(1.4,0.55)
\Pline (1.4,0.55)(1.4,1)
\Pline (2.1,0)(2.1,0.55)
\Pline (2.1,0.55)(1.6,0.55)
\Pline (1.6,0.55)(1.6,1)
}
$ for $\RCat_{PO_n^+}$.

(While $S_{n^2}^+$ is the quantum automorphism group of the classical set of $n^2$ points \cite{Wan98}, the quantum group $PO_n^+$ is the quantum automorphism group of $M_n(\C)$ \cite{Ban99}. The fact that all quantum symmetry groups of so-called \emph{finite quantum spaces} of a given size are described by the same representation category is implicitly contained already in \cite{Ban02}. It was formulated as an explicit statement in \cite[Theorem~4.7]{RV10}. The diagrammatic isomorphism was described in detail in \cite[Section~4]{KS08}.)
\end{rem}

\begin{ex}[Orthogonal group]
We can do the same with the orthogonal group $O_n$. Its representation category is generated by $\{T^{(n)}_{\pairpart},T^{(n)}_{\crosspart}\}$. So, the representation category of $PO_n$ must be generated by $\{T^{(n)}_{\pairpart},T^{(n)}_{\immersepart},T^{(n)}_{\crosspart},T^{(n)}_{\IcrosspartI}\}$. The generator $T_{\pairpart}^{(n)}$ can again be replaced by $T^{(n)}_{\Labba}$ and the last one $T^{(n)}_{\IcrosspartI}$ can actually be omitted as we have
$$
\IcrosspartI=
\Partition{
\Pline (1,2) (1,1.1)
\Pblock 1.1to1.5:2,3
\Pline (4,2) (4,1.1)
\Pline (5,2) (5,1.1)
\Pline (6,2) (6,1.1)
\Pline (1,1) (1,0)
\Pline (2,1) (2,0)
\Pline (3,1) (4,0)
\Pline (4,1) (3,0)
\Pline (5,1) (5,0)
\Pline (6,1) (6,0)
\Pline (1,-1) (1,-0.1)
\Pline (2,-1) (2,-0.1)
\Pline (3,-1) (3,-0.1)
\Pblock -0.1to-0.5:4,5
\Pline (6,-1) (6,-0.1)
}
$$

Hence, we can write
$$\Oalg(PO_n^+)=\staralg\left(v^{ik}_{jl}\,\middle|\,\begin{matrix}\sum_{k,l}v^{ab}_{kl}v^{cd}_{lk}=\delta_{ad}\delta_{bc}=\sum_{k,l}v^{kl}_{ab}v^{lk}_{cd},\\v^{ik\,*}_{jl}=v^{ki}_{lj}=v^{ik}_{jl},\;\sum_kv^{ai}_{bk}v^{jc}_{kd}=\delta_{ij}v^{ac}_{bd}\end{matrix}\right).$$

$PO_n$ is of course a group, so the corresponding algebra $\Oalg(PO_n)$ should be commutative. This is not so obvious looking at the relations only, but it is quite easy to derive using diagrams. Let us indicate the relation-based proof:
$$v^{ik}_{jl}v^{ac}_{bd}=\sum_{r,s}v^{ir}_{jb}v^{rk}_{sl}v^{ac}_{sd}=\sum_{r,s}v^{ir}_{jb}v^{kr}_{ls}v^{ac}_{sd}=v^{ia}_{jb}v^{kc}_{ld}=v^{ai}_{bj}v^{ck}_{dl}=\cdots=v^{ac}_{bd}v^{ik}_{jl}.$$

Since $PO_n$ is a group, we can also rewrite this in the standard matrix group notation. If we define $PO_n:=\{A\otimes A\mid A\in O_n\}$, then Theorem~\ref{T} implies that
$$PO_n=\left\{B\in M_{n^2}(\R)\,\middle|\,\begin{matrix}\sum_{k,l}B^{ab}_{kl}B^{cd}_{lk}=\delta_{ad}\delta_{bc}=\sum_{k,l}B^{kl}_{ab}B^{lk}_{cd},\\B^{ik\,*}_{jl}=B^{ki}_{lj}=B^{ik}_{jl},\;\sum_kB^{ai}_{bk}B^{jc}_{kd}=\delta_{ij}B^{ac}_{bd}\end{matrix}\right\}.$$
\end{ex}

\begin{ex}[Anticommutative orthogonal quantum group]
Applying the anticommutative twist to the preceding example, we get the same results for the anticommutative orthogonal quantum group $O_n^{-1}$. Recall from \cite{BBC07,GWgen} that the representation category of $O_n^{-1}$ is monoidally isomorphic to the one of $O_n$. To be concrete, it is generated by $\{\breve T^{(n)}_{\pairpart},\breve T^{(n)}_{\crosspart}\}$, with $[\breve T^{(n)}_{\pairpart}]^{ij}=\delta_{ij}$ and $[\breve T^{(n)}_{\crosspart}]^{ij}_{kl}=-\delta_{il}\delta_{jk}+2\delta_{ijkl}$.

Consequently, we can essentially copy all the results from the previous example. In particular, we have that the category $\RCat_{PO_n^{-1}}$ is generated by $\{\breve T^{(n)}_{\pairpart},\breve T^{(n)}_{\immersepart},\breve T^{(n)}_{\crosspart}\}$.
\end{ex}

\section{Application: $PU_n^+=PO_n^+$}
\label{sec.appl}

In this section, we would like to illustrate how the results can be further applied. We use the concrete presentation of $PO^+(F)$ by relations to show that it is equal to $PU^+(F)$.

The \emph{universal unitary quantum group} $U^+(F)$ is defined through the following algebra \cite{Ban97}
$$\Oalg(U^+(F))=\staralg(u^i_j\mid \text{$u$ and $F\bar uF^{-1}$ are unitary}).$$

In particular, choosing $F=\id$, we get the \emph{free unitary quantum group} $U_n^+$ defined earlier in \cite{Wan95free}
$$\Oalg(U^+(F))=\staralg(u^i_j\mid uu^*=1_{n\times n}=u^*u,\; \bar uu^t=1_{n\times n}=u^t\bar u).$$

For any $G\subset U^+(F)$, we define its projective version $PG$ to be the quantum group with fundamental representation $v=u\otimes F\bar uF^{-1}$ and coordinate algebra $\Oalg(PG)\subset\Oalg(G)$ generated by the entries of $v$. Note that this is consistent with the definition for $G\subset O^+(F)$ since in that case we have $u=F\bar uF^{-1}$.

\begin{thm}
It holds that $PU^+(F)=PO^+(F)$. In particular, $PU_n^+=PO_n^+$.
\end{thm}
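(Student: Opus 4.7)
The easier direction $PO^+(F) \subseteq PU^+(F)$ follows at once from the inclusion $O^+(F) \subseteq U^+(F)$ of compact matrix quantum groups: the induced Hopf $*$-algebra surjection $\Oalg(U^+(F)) \twoheadrightarrow \Oalg(O^+(F))$ sends the generators $v^{ik}_{jl}$ of $\Oalg(PU^+(F))$ to those of $\Oalg(PO^+(F))$, so it restricts to a surjection on the projective subalgebras. The content of the theorem is thus the reverse inclusion $PU^+(F)\subseteq PO^+(F)$, equivalently (by Woronowicz--Tannaka--Krein) the category inclusion $\RCat_{PO^+(F)}\subseteq\RCat_{PU^+(F)}$.

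To establish it, apply Theorem~\ref{T} to $G=O^+(F)$. Since $\RCat_{O^+(F)}$ is generated (by definition) by the single duality morphism $T_{\pairpart}^{(n)}$, the theorem gives
\[
\RCat_{PO^+(F)}
 = \langle T_{\pairpart}^{(n)},\; \id_{\C^n}\otimes T_{\pairpart}^{(n)}\otimes\id_{\C^n}\rangle_{2n}
 = \langle T_{\pairpart}^{(n)},\; T_{\immersepart}^{(n)}\rangle_{2n}.
\]
It therefore suffices to show that each of these two tensors intertwines the relevant tensor powers of the representation $v = u\otimes u^c$ of $U^+(F)$, where $u^c:=F\bar uF^{-1}$.

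For $T_{\pairpart}^{(n)}\in\Mor(1,v)$ this is immediate: $T_{\pairpart}^{(n)}$ is nothing but the canonical duality morphism $R\colon\C\to u\otimes u^c$ in $U^+(F)$, and its intertwiner property amounts to the unitarity of $u$. For $T_{\immersepart}^{(n)}\in\Mor(v,v\otimes v)$ the natural strategy is to identify it with $\id_u\otimes\bar R\otimes\id_{u^c}$, where $\bar R\colon\C\to u^c\otimes u$ is the opposite duality morphism of $U^+(F)$: the latter tautologically intertwines $v$ with $v\otimes v = u\otimes u^c\otimes u\otimes u^c$, since $\bar R$ is an intertwiner of $u^c\otimes u$ and the outer identities sit on $u$ and $u^c$ respectively.

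The main obstacle is that $\bar R$ is not literally the same tensor as $R$: its components are pinned down by the conjugate equations (equivalently, by the unitarity of $u^c$), so some bookkeeping with $F$ is required to match $\id_u\otimes\bar R\otimes\id_{u^c}$ with the middle slot of $T_{\immersepart}^{(n)}$, which comes from $R$. Once that matching is in place, both generators lie in $\RCat_{PU^+(F)}$, so $\RCat_{PO^+(F)}\subseteq\RCat_{PU^+(F)}$, whence $PU^+(F)\subseteq PO^+(F)$ and, combined with the obvious direction, $PU^+(F)=PO^+(F)$.
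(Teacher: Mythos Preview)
Your approach coincides with the paper's: both handle the easy inclusion identically, and for the nontrivial inclusion $PU^+(F)\subseteq PO^+(F)$ you invoke Theorem~\ref{T} (equivalently Example~\ref{E.free}) to reduce to checking that the two generating tensors $T_{\pairpart}^{(n)}$ and $T_{\immersepart}^{(n)}$ intertwine the relevant powers of $v=u\otimes u^c$ in $U^+(F)$. The paper phrases this as ``check that $PU^+(F)$ satisfies the relations derived for $PO^+(F)$,'' which is the same thing said in coordinates.

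The gap lies in your treatment of $T_{\immersepart}^{(n)}$. You correctly observe that what is really needed is $T_{\pairpart}^{(n)}\in\Mor_{U^+(F)}(1,u^c\otimes u)$ for the middle slot, but instead of verifying this you try to match $T_{\pairpart}^{(n)}$ with an abstract opposite duality $\bar R$ and then defer the identification to unspecified ``bookkeeping with $F$.'' That deferred step \emph{is} the content of the check, so the argument is incomplete as written---you have named the obstacle rather than removed it. The paper's route is more direct and avoids the detour entirely: one simply writes the intertwiner relation for $T_{\immersepart}^{(n)}$ in coordinates and sees that it reduces to a defining relation of $U^+(F)$. For $F=\id$ this is the one-line computation
\[
\sum_kv^{ai}_{bk}v^{jc}_{kd}=u^a_b\Bigl(\sum_k u^{i*}_ku^j_k\Bigr)u^{c*}_d=\delta_{ij}\,v^{ac}_{bd},
\]
using $\bar u u^t=1$; for general $F$ the analogous identity follows from the unitarity of $F\bar uF^{-1}$. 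No comparison with an abstract $\bar R$ is required, and carrying out this short computation (rather than postponing it) is what completes the proof.
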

\begin{proof}
Since $O^+(F)\subset U^+(F)$ (by which we mean that $\Oalg(O^+(F))$ is a quotient of $\Oalg(U^+(F))$), it is obvious that $PO^+(F)\subset PU^+(F)$. On the other hand, it is straightforward to check that $PU^+(F)$ satisfies the relations that we derived for $PO^+(F)$, which proves the opposite inclusion $PU^+(F)\subset PO^+(F)$.

Let us do it here explicitly for the case $PU_n^+=PO_n^+$. The relations of $PO_n^+$ were explicitly derived in Example~\ref{E.free}. Denoting by $v=u\otimes\bar u$ the fundamental representation of $U_n^+$, we indeed have
\begin{align*}
v^{ik\,*}_{jl}&=(u^i_ju^{k\,*}_l)^*=u^k_lu^{i\,*}_j=v^{ki}_{lj},\\
vv^*&=(uu^*\otimes\bar uu^t)=1=(u^*u\otimes u^t\bar u)=v^*v,\\
\sum_kv^{ai}_{bk}v^{jc}_{kd}&=\sum_ku^a_bu^{i\,*}_ku^j_ku^{c\,*}_d=\delta_{ij}v^{ac}_{bd}.\qedhere
\end{align*}
\end{proof}

\begin{rem}
There is actually also an alternative proof for this statement. In \cite[Théorème~1(iv)]{Ban97}, it was proven that $U^+(F)$ is a \emph{free complexification} of $O^+(F)$, denoted sometimes as $U^+(F)=O^+(F)\mathbin{\tilde *}\hat\Z$ (see also \cite{GroGlue}). This means that $\Oalg(U^+(F))\subset\Oalg(O^+(F))*\C\Z$ such that $u^i_j=w^i_jz$, where $w$ is the fundamental representation of $O^+(F)$ and $z$ is the generator of $\C\Z$. Consequently, it is easy to see that $v^{ik}_{jl}=w^i_jzz^*w^{k*}_l=w^i_jw^{k*}_l$, so it equals to the fundamental representation of $PO^+(F)$.
\end{rem}

\bibliographystyle{halpha}
\bibliography{mybase}

\end{document}